\newtheorem{lemma}{Lemma}[section]
\newtheorem{proposition}{Proposition}[section]
\DeclareMathOperator{\ord}{ord}
\begin{document}

\title[String C-groups for $Suz$ and $Ru$]{The string C-group representations of the Suzuki and Rudvalis sporadic groups}

\author{Dimitri Leemans}
\address{Dimitri Leemans, Département de Mathématique, Université libre de Bruxelles, C.P.216, Boulevard du Triomphe, 1050 Brussels, Belgium}
\email{dleemans@ulb.ac.be}
\author{Jessica Mulpas}
\address{Jessica Mulpas, Département de Mathématique, Université libre de Bruxelles, C.P.216, Boulevard du Triomphe, 1050 Brussels, Belgium}
\email{jessica.mulpas@outlook.com}
\maketitle
\begin{abstract}

We present new algorithms to classify all string C-group representations of a given group $G$.
We use these algorithms to classify all string C-group representations of the sporadic groups of Suzuki and Rudvalis.
\end{abstract}

\maketitle

\section{Introduction}

A natural way of gaining knowledge on the structure of a group is to search for geometric and combinatorial objects on which it can act. Among those objects, abstract regular polytopes are of great interest for they are highly symmetric combinatorial structures with many singular algebraic and geometric properties. Moreover, due to the well-known one-to-one correspondence between abstract regular polytopes and string C-groups, finding an abstract regular polytope with a given automorphism group $G$ means obtaining a presentation for $G$ as generated by a set of involutions with many convenient properties. Finally, string C-groups are smooth quotients of Coxeter groups, hence any Coxeter diagram found in this process permits to conclude that the group appears as a quotient of the corresponding (infinite) Coxeter group.

In 2006, Dimitri Leemans and Laurence Vauthier published "An atlas of polytopes for almost simple groups" \cite{AtlasSmallGroups}, in which they gave classfications of all abstract regular polytopes with automorphism group an almost simple group with socle of order up to 900,000 elements. The same year, Michael Hartley published "An atlas of small regular polytopes" \cite{AtlasHartley}, where he classified all regular polytopes with automorphism groups of order at most 2000 (not including orders 1024 and 1536). More recently in \cite{HartleyHulpke}, Hartley also classified, together with Alexander Hulpke, all polytopes for the sporadic groups as large as the Held group (of order 4,03,387,200)  and Leemans and Mark Mixer classified, among others, all polytopes for the third Conway group (of order 495,766,656,000) \cite{LastAlgo}. Leemans, Mixer and Thomas Connor then computed all regular polytopes of rank at least four for the $O'Nan$ group \cite{ConnorONan}.

These computational data lead to many theoretical results.
For more details we refer to the recent survey~\cite{LeemansSurvey}.

The recorded computational times and memory usages for the use of previous algorithms on groups among which the $O'Nan$ group \cite{ConnorONan} and the third Conway group 
\cite{LastAlgo} provide a motivation to improve the algorithms in order to obtain computational data for groups currently too large. 

The challenges involve dealing with groups that have a relatively large smallest permutation representation degree as well as groups with large Sylow 2-subgroups. The largest sporadic group for which a classification of all string C-group representations exists, is the third Conway group whose smallest permutation representation degree is on 276 points and whose Sylow 2-subgroups are of order $2^{10} = 1024$.

We use the technique described in~\cite{ConnorONan} and develop it further to obtain an algorithm that can be used on any permutation group as well as any matrix group to find polytopes of rank at least four. We also present a new technique to find polytopes of rank three.

With these new algorithms we managed to classify all string C-group representations of the sporadic groups $Suz$ and $Ru$. Although these groups have order smaller than the one of $Co_3$, their smallest permutation representation degrees are respectively 1782 and 4060, and their Sylow 2-subgroups are of respective orders $2^{13}$ and $2^{14}$.

The classifications are made available in the online version of the
Atlas~\cite{AtlasSmallGroups}.

The paper is organised as follows.
In Section~\ref{prelim}, we give the background needed to understand this paper.
In Section~\ref{algo}, we give some useful results to improve existing algorithms, as well as the pseudo-code of our new algorithm in rank at least four.
In Section~\ref{results}, we describe some of the results obtained with the new algorithm.

\section{String C-group representations}\label{prelim}

Let $G$ be a group and $S := \{\rho_0,\rho_1,...,\rho_{n-1}\}$ be a set of elements of $G$ such that $G = \langle S \rangle$.
We define the following two properties.
\begin{description}[style=unboxed,leftmargin=0cm]
\item[(C1)] the {\em string property}, that is $(\rho_i\rho_j)^{p_{ij}}=1_G$ where $1_G$ is the identity element of $G$, $p_{ii}=1$ for all $i \in \{ 0,1,...,n-1 \}$, $p_{ij}=2$ when $\mid i-j \mid \geq 2$ and $2 \leq p_{ji}=p_{ij} \leq \infty$ when $j=i-1$;
\item[(C2)] the {\em intersection property}, that is $\langle \rho_i \mid i \in I \rangle \cap \langle \rho_j \mid j \in J \rangle = \langle \rho_k \mid k \in I \cap J \rangle$ for any $I, J \subseteq \{ 0, 1,...,n-1 \}$
\end{description}
A pair $(G,S)$ as above that satisfies property $(C1)$ is called a {\em string group generated by involutions} (or sggi in short). A {\em string C-group representation} is an sggi that satisfies property $(C2)$. The {\em rank} of $(G,S)$ is the size of $S$.
For any subset $I\subseteq \{0, \ldots, n-1\})$, we denote by $G_I := \langle \rho_j : j \neq I\rangle$. If $I = \{i\}$, we define $G_{\{i\}}$ by $G_i$. Similarly, if $I = \{i,j\}$,  we denote $G_{\{i,j\}}$ by $G_{ij}$

As shown in~\cite{ARP}, 
the automorphism group $\Gamma(\mathcal{P})$ of an abstract regular polytope $\mathcal{P}$, together with the involutions that map a base flag to its adjacent flags is a string C-group representation (see~\cite[Propositions 2B8, 2B10 and 2B11]{ARP}). More precisely, if one fixes a base flag $\Phi$ of $\mathcal{P}$, $\Gamma(\mathcal{P})=\langle \rho_0,\rho_1,...,\rho_{n-1} \rangle$ with $\rho_i$ the unique involution such that $\rho_i(\Phi)=\Phi^i$ ($\Phi^i$ is the flag obtained from $\Phi$ by changing its element of rank $i$) then the pair $(\Gamma({\mathcal{P}}),\{\rho_0,\rho_1,...,\rho_{n-1}\})$ is a string C-group representation. Moreover, $\{ p_0,p_1,...,p_{n-1} \}$ where $\ord(\rho_{i-1}\rho_{i})=p_i$ being the Schl\"afli type of $\mathcal{P}$, we define the {\em Schl\"afli type} of a string C-group representation $(G,\{\rho_0, \ldots, \rho_{n-1}\})$ as the ordered set $\{ p_0,p_1,...,p_{n-1} \}$ where $p_i := \ord(\rho_{i-1}\rho_{i})$ for $i=1, \ldots, n-1$.

Conversely, as shown in~\cite[Theorem 2E11]{ARP}, a regular $n$-polytope can be constructed uniquely from a string C-group representation $(G,S)$ with  $S:=\{ \rho_0,...,\rho_{n-1}\}$. Let $G_i := \langle \rho_j \mid j\neq i \rangle$ for any $i \in \{ 0,1,...,n-1 \}$. We also set $G_{-1}=G_{n}:=G$.
For $i \in \{ -1,0,1,...,n-1,n \}$, we take the set of $i$-faces of $\mathcal{P}$ to be the set of all right cosets $G_i\varphi$ of $G_i$ in $G$. We define a partial order on $\mathcal{P}$ as follows : $G_i\varphi \leq G_j\psi$ if and only if $-1\leq i \leq j \leq n$ and $G_i\varphi \cap G_j\psi \neq \emptyset$.

We say that two string C-group representations $(G,S)$ and $(G,S')$ of $G$ are {\em isomorphic} if there exists an automorphism of $G$ that maps $S$ onto $S'$.
The {\em dual} of a string C-group representation $(G,\{\rho_0,\rho_1,...,\rho_{n-1}\})$ is the string C-group representation $(G,\{\rho_{n-1},\rho_{n-2},...,\rho_{0}\})$.

Due to the one-to-one correspondence between string C-group representations and automorphism groups of abstract regular polytopes, finding all abstract regular polytopes with a fixed automorphism group amounts to considering all ways of presenting a particular group as a string C-group and verifying whether they yield non-isomorphic abstract regular polytopes. 
Our algorithms classify string C-group representations of a group $G$ up to isomorphism and duality.

\section{Presentation of the new algorithms}\label{algo}
In this section, we present our improvements of the existing algorithms for computing all string C-group representations $(G,\{\rho_0, \ldots, \rho_{n-1}\})$ of a given group $G$ (up to isomorphism and duality).
\subsection{The large ranks}
We describe our technique to find string C-group representations of rank at least four. The algorithm uses the following observations.

\begin{lemma}\label{subcent}
Let $G$ be a group.
Let $(G,\{\rho_0, \ldots, \rho_{n-1}\})$ be a string C-group representation of $G$.
The subgroup $G_1$ is a subgroup of the centraliser of $\rho_0$, in particular $G_1 = \langle\rho_0\rangle \times G_{01}$ where $G_{01} \leq C_G(\rho_0)$.
\end{lemma}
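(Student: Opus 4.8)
The plan is to unwind the definitions of $G_1$ and $G_{01}$ and then feed the two defining axioms (C1) and (C2) into the standard internal-direct-product criterion. Recall that $G_1 = \langle \rho_j : j \neq 1\rangle = \langle \rho_0, \rho_2, \rho_3, \ldots, \rho_{n-1}\rangle$ while $G_{01} = \langle \rho_2, \rho_3, \ldots, \rho_{n-1}\rangle$, so that $G_1$ is generated by $\rho_0$ together with the generators of $G_{01}$; in particular $G_1 = \langle \rho_0\rangle\, G_{01}$ as a product of subgroups.

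First I would exploit the string property (C1). For every $j$ with $|0 - j| \geq 2$, that is for every $j \in \{2, \ldots, n-1\}$, the relation $(\rho_0\rho_j)^{2} = 1_G$ together with $\rho_0^2 = \rho_j^2 = 1_G$ forces $\rho_0$ and $\rho_j$ to commute. Hence $\rho_0$ commutes with each generator of $G_{01}$, and therefore with all of $G_{01}$. Since $\rho_0$ also commutes with itself, every generator of $G_1$ lies in $C_G(\rho_0)$, giving at once $G_1 \leq C_G(\rho_0)$ and, a fortiori, $G_{01} \leq C_G(\rho_0)$. Note that this part of the statement uses only (C1).

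Next I would invoke the intersection property (C2) with $I = \{0\}$ and $J = \{2, 3, \ldots, n-1\}$. Since $I \cap J = \emptyset$, the right-hand side of (C2) is the trivial subgroup $\langle \rho_k : k \in \emptyset\rangle = \{1_G\}$, so $\langle \rho_0\rangle \cap G_{01} = \{1_G\}$. This is the one step where (C2) is genuinely needed, and I expect it to be the only real subtlety: from (C1) alone one could conclude $\langle \rho_0\rangle \cap G_{01} \in \{\{1_G\}, \langle\rho_0\rangle\}$, and ruling out the possibility $\rho_0 \in G_{01}$ is precisely the content of the intersection property.

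Finally I would combine the ingredients. We have $G_1 = \langle \rho_0\rangle\, G_{01}$; the two factors intersect trivially by (C2); and they centralise one another by (C1). These are exactly the hypotheses of the internal direct product criterion, so $G_1 = \langle \rho_0\rangle \times G_{01}$, which completes the proof. The only point deserving care is that the direct-product decomposition requires both the elementwise commuting supplied by (C1) and the triviality of the intersection supplied by (C2), rather than either axiom in isolation.
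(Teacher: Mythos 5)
Your proof is correct, but it takes a more self-contained route than the paper does. The paper's entire proof of this lemma is a citation: it observes that the statement is a direct consequence of Proposition 2B12 in McMullen--Schulte's \emph{Abstract Regular Polytopes} (which gives the direct product decomposition of subgroups generated by sets of generators that are pairwise non-adjacent in the string diagram) together with the definition of string C-groups. What you have done, in effect, is to reprove that cited proposition in the special case $I=\{0\}$, $J=\{2,\ldots,n-1\}$: the string property (C1) forces $\rho_0$ to commute with each $\rho_j$, $j\geq 2$ (using $\rho_0^2=\rho_j^2=1_G$, which comes from $p_{ii}=1$), giving $G_1\leq C_G(\rho_0)$; the intersection property (C2) applied to $\{0\}$ and $\{2,\ldots,n-1\}$ gives $\langle\rho_0\rangle\cap G_{01}=\{1_G\}$; and the internal direct product criterion then yields $G_1=\langle\rho_0\rangle\times G_{01}$. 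Your version buys transparency --- it isolates precisely which axiom is responsible for which ingredient, and correctly flags that (C2) is indispensable for ruling out $\rho_0\in G_{01}$ --- at the cost of redoing work that the literature already packages; the paper's version buys brevity and invokes a statement general enough to handle arbitrary non-adjacent index sets, not just this one. Mathematically the two arguments rest on the same facts, so there is nothing to repair.
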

\begin{proof}
This is a direct consequence of Proposition $2B12$ in \cite{ARP} and of the definition of string C-groups.
\end{proof}

This lemma implies that we can take for $\rho_0$ a representative of a conjugacy class of involutions of $G$, compute its centraliser $C_G(\rho_0)$ and then the subgroup $G_1$ must be a subgroup of $C_G(\rho_0)$.

We then proceed to construct $G_1$.
In order to do so, we produce all string C-group representations of subgroups $H$ of $C_G(\rho_0)$  that have $\rho_0$ as first generator.
As $C_G(\rho_0)$ is substantially smaller than $G$, one may now apply the former algorithms (from \cite{LastAlgo}) to $C_G(\rho_0)$ in order to classify all the string C-group representations of all its subgroups. With each string C-group representation $\langle \rho_0, \rho_2,...,\rho_{n-1} \rangle$ of $G_1$, we proceed by adding an appropriate involution $\rho_1$ of $G$ to it, giving a string C-group representation for $G$. 
In order to restrict the number of involutions $\rho_1$ to consider, we use the following observation.
\begin{lemma}\label{choose1}
If  $(G,\{\rho_0, \ldots, \rho_{n-1}\})$ is a string C-group representation of the group $G$, then $\rho_1\in C_G(\rho_3) \cap \ldots \cap C_G(\rho_{n-1})$.
\end{lemma}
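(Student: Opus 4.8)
The plan is to deduce the claim directly from the string property (C1); I do not expect to need the intersection property (C2) at all. Recall that in a string C-group all the $\rho_i$ are involutions, and that (C1) prescribes $p_{ij} = 2$, i.e.\ $(\rho_i\rho_j)^2 = 1_G$, whenever $|i-j| \geq 2$.

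First I would fix an arbitrary index $j \in \{3, \ldots, n-1\}$. Since $|1 - j| = j - 1 \geq 2$, the string property gives $(\rho_1\rho_j)^2 = 1_G$. I would then expand this to $\rho_1\rho_j\rho_1\rho_j = 1_G$ and use $\rho_1^2 = \rho_j^2 = 1_G$ to rearrange it into $\rho_1\rho_j = \rho_j\rho_1$, equivalently $\rho_1 \in C_G(\rho_j)$. Letting $j$ range over $\{3, \ldots, n-1\}$ and intersecting the resulting memberships then yields $\rho_1 \in C_G(\rho_3) \cap \ldots \cap C_G(\rho_{n-1})$, as required.

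There is no real obstacle here: the statement is an immediate unwinding of (C1). The only point deserving attention is the index bookkeeping, namely checking that the indices excluded from the intersection are exactly $0$ and $2$, the two neighbours of $1$ in the string diagram, for which $|1-j| = 1$ and hence (C1) imposes no commutation relation. This matches the fact that $\rho_1$ need not centralise its diagram-adjacent generators.
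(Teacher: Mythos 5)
Your proof is correct and follows essentially the same route as the paper: both arguments rest on the fact that $\rho_1$ commutes with every $\rho_j$ for $j\geq 3$ because non-adjacent generators commute, and your expansion of $(\rho_1\rho_j)^2=1_G$ into $\rho_1\rho_j=\rho_j\rho_1$ via $\rho_1^2=\rho_j^2=1_G$ is just a more explicit rendering of that step. One point in your favour: the paper's proof attributes this commutation to property (C2), but as you correctly observe it is the string property (C1) (with $p_{1j}=2$ for $|1-j|\geq 2$) that forces it, so the paper's citation is evidently a slip and your attribution is the right one.
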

\begin{proof}
Since $\rho_1$ has to commute with every generator $\rho_i$ for $i\geq 3$ by property (C2), we have that $\rho_1\in C_G(\rho_3) \cap \ldots \cap C_G(\rho_{n-1})$.
\end{proof}

We then check that the resulting pair $(G,\{ \rho_0, \rho_1,..., \rho_{n-1}\})$ is a string C-group representation of $G$ using the following proposition.

\begin{proposition}\label{checkpoly}\cite[Proposition 2E16(a)]{ARP}
Let $(G,\{ \rho_0, \rho_1,..., \rho_{n-1}\})$ be an sggi.
If $G_0$ and $G_{n-1}$ are string C-groups and if $G_{n-1} \; \cap\; G_0= G_{0,n-1}$ then $(G,\{ \rho_0, \rho_1,..., \rho_{n-1}\})$ is  a string C-group.
\end{proposition}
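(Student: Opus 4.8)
The plan is to verify the intersection property (C2) for the sggi $(G,\{\rho_0,\ldots,\rho_{n-1}\})$; the string property (C1) is assumed throughout. Writing $N=\{0,\ldots,n-1\}$ and using the complement convention $G_A=\langle\rho_j : j\notin A\rangle$ of the preliminaries, the condition (C2) is equivalent to
$$G_A\cap G_B=G_{A\cup B}\qquad\text{for all }A,B\subseteq N,$$
since $\langle\rho_i : i\in I\rangle=G_{N\setminus I}$. Because $\{j : j\notin A\cup B\}$ is contained in both $\{j : j\notin A\}$ and $\{j : j\notin B\}$, the inclusion $G_{A\cup B}\subseteq G_A\cap G_B$ is automatic, so only $G_A\cap G_B\subseteq G_{A\cup B}$ needs proof. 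I would argue by induction on the rank $n$, the cases $n\le 2$ being immediate: there the hypothesis $G_0\cap G_{n-1}=G_{0,n-1}$ is literally the single nontrivial instance of (C2).

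The first real step is to establish that \emph{every} maximal parabolic $G_m$ is a string C-group. For $m\in\{0,n-1\}$ this is a hypothesis. For an interior index $0<m<n-1$, the string property forces $\rho_{m-1}$ and $\rho_{m+1}$ to commute, so relabelling the generators of $G_m$ in their natural order exhibits $(G_m,\{\rho_j : j\ne m\})$ as an sggi of rank $n-1$ with a string diagram, to which I would apply the induction hypothesis. Its two end parabolics are $G_{0,m}$ and $G_{m,n-1}$, which are maximal parabolics of the C-groups $G_0$ and $G_{n-1}$ respectively, hence C-groups; and the required condition $G_{0,m}\cap G_{m,n-1}=G_{0,m,n-1}$ follows cleanly: the left-hand side lies in $G_0\cap G_{n-1}=G_{0,n-1}$ by hypothesis, so it equals $(G_{0,m}\cap G_{0,n-1})\cap(G_{m,n-1}\cap G_{0,n-1})$, and applying the intersection property of $G_0$ to the first factor and of $G_{n-1}$ to the second collapses each to $G_{0,m,n-1}$. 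Thus the induction hypothesis makes $G_m$ a C-group, and all maximal parabolics are C-groups.

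With this in hand I would check $G_A\cap G_B\subseteq G_{A\cup B}$ by cases. When $A\cap B\ne\emptyset$, choose $m\in A\cap B$; then both $G_A$ and $G_B$ are parabolics of the C-group $G_m$, and its intersection property yields the equality at once (since $m\in A\cup B$, omitting $(A\cup B)\setminus\{m\}$ inside $G_m$ is the same as omitting $A\cup B$ in $G$). The cases $A=\emptyset$ or $B=\emptyset$ are trivial. The remaining case, with $A$ and $B$ disjoint and nonempty, is the crux and the step I expect to be the main obstacle: here $G_A$ and $G_B$ omit no common generator, so they cannot both be placed inside a single proper parabolic, and no appeal to a C-group parabolic settles the intersection directly. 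This disjoint case is handled by a finer induction that exploits the linear order of the string diagram—the fact that far-apart generators commute—to peel the configuration down to its extremes, ultimately reducing everything to the single assumed instance $G_0\cap G_{n-1}=G_{0,n-1}$. This reduction, which genuinely uses that the diagram is a string rather than an arbitrary Coxeter diagram, is the technical heart of Proposition~\ref{checkpoly} and is precisely the content supplied by \cite[Proposition 2E16(a)]{ARP}.
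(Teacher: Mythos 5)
Your setup is correct as far as it goes: the reformulation of (C2) as $G_A\cap G_B=G_{A\cup B}$ in complement notation, the induction showing that every interior maximal parabolic $G_m$ ($0<m<n-1$) is a string C-group (the verification $G_{0,m}\cap G_{m,n-1}=G_{0,m,n-1}$ via the hypothesis and the intersection properties of $G_0$ and $G_{n-1}$ is clean and correct), and the disposal of the case $A\cap B\neq\emptyset$ are all sound. But the proof then stops exactly where the proposition begins: the case of disjoint $A$ and $B$ is not an ancillary technicality you may outsource, it \emph{is} the content of the statement --- note that the hypothesis $G_0\cap G_{n-1}=G_{0,n-1}$ is itself the extreme disjoint instance, and the whole point is to propagate it to every other disjoint pair. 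Dismissing this step as ``precisely the content supplied by \cite[Proposition 2E16(a)]{ARP}'' is circular, since that citation \emph{is} the statement under proof. (For what it is worth, the paper itself offers no proof either: it quotes the result from \cite{ARP}, so the burden of the blind attempt was exactly to supply the argument you skipped.)

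To see what is genuinely missing: within your framework one can reduce the disjoint case to singleton pairs, since for disjoint nonempty $A,B$ with $a\in A$, $b\in B$, knowing $G_a\cap G_b=G_{ab}$ lets you write $G_A\cap G_B=(G_A\cap G_{ab})\cap(G_B\cap G_{ab})=G_{A\cup\{b\}}\cap G_{B\cup\{a\}}$, and these two sets now overlap in $\{a,b\}$, so your overlapping case finishes. But the singleton identities $G_a\cap G_b=G_{ab}$ for $(a,b)\neq(0,n-1)$ still require real work, and this is where the string property enters in an essential way. The route taken in \cite{ARP} is: first show, for $k\geq 1$, that $G_{n-1}\cap\langle\rho_k,\ldots,\rho_{n-1}\rangle=\langle\rho_k,\ldots,\rho_{n-2}\rangle$ --- this does follow formally, since $\langle\rho_k,\ldots,\rho_{n-1}\rangle\leq G_0$, so the left side equals $G_{0,n-1}\cap\langle\rho_k,\ldots,\rho_{n-1}\rangle$, an intersection of two parabolics of the C-group $G_0$; then feed these ``tail'' identities into an inductive criterion (Proposition 2E13 of \cite{ARP}) whose proof decomposes an arbitrary $\langle\rho_i:i\in I\rangle$ with $n-1\in I$ as a commuting product of its tail block $\langle\rho_k,\ldots,\rho_{n-1}\rangle$ and the rest, using that generators at distance at least two commute. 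That decomposition-and-induction argument is the missing idea; without it (or an equivalent), your proof establishes strictly less than the proposition claims.
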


Finally, let us note that although the former algorithms only allowed to be implemented for groups of permutations, this new algorithm also works for matrix groups. This gives access to groups much larger in size, as long as the centralizers of involutions of $G$ are small enough to be treated as permutation groups.

 We give in Table~\ref{pseudocode} the pseudo-code of our new algorithm.

\begin{table}
\begin{lstlisting}
Input: $G$ the group for which we want to compute all 
       string C-group representations.
Output: $L$ a sequence containing the pairwise non-isomorphic
        string C-group representations.

Compute the conjugacy classes $C(G)$ of elements of $G$.
Initialise $L$.
For each conjugacy class $c$ of elements of order $2$ in $C(G)$:
  Let $r_0$ be a representative of that class.
  Build the centralizer $H$ of $r_0$ in $G$.
  If $G$ is a matrix group,
    find a permutation representation $P(H)$ of $H$;
    reduce the degree of $P(H)$;
  If $G$ is already a permutation group then
    only reduce its degree and call it $P(H)$.
  Using the existing procedures to do so (see [6]),
    compute the string C-group representations with generators
    $\{r_0, r_2, \ldots, r_{n-1}\}$ for subgroups of $C_G(r_0)$,
    forcing $r_0$ as first generator.
  For each such representation, try to complete it by inserting an
  involution $r_1$ between $r_0$ and $r_2$, using the fact that
  it has to be in the centralisers of the $r_i$'s for 
  $i=3, \ldots, n-1$.
  Compute $G_0=\langle r_1,...,r_{n-1} \rangle$, $G_{n-1}=\langle r_0,...,r_{n-2} \rangle$ and $G_{0,n-1}=\langle r_1,...,r_{n-2} \rangle$.
  Check the intersection property for $G_0$ and $G_{n-1}$ and
  Check that $G_0 \cap G_{n-1}=G_{0,n-1}$.
  Let $GP=\langle r_0,r_1,...,r_{n-1}\rangle$ and let $\overset{\sim}{GP}=\langle r_{n-1},...,r_1,r_0\rangle$ be its dual.
  If $GP$ and $\overset{\sim}{GP}$ are non-isomorphic to one of the elements of $L$,
    Add $GP$ to $L$.
At the end, $L$ contains one representative of each isomorphism
class of string C-group representation of $G$.
\end{lstlisting}
\caption{The pseudo-code of the new algorithm}\label{pseudocode}
\end{table}
\subsection{The rank three case}
An improvement can be made to compute the rank three string C-group representations of a given group $G$ as follows.

We first need to find one representative of each conjugacy class of dihedral subgroups. In order to do this, we compute the conjugacy classes of elements and for each representative $g$ of such a conjugacy class with $o(g)\geq 3$, we compute the normalizer $N := N_G(\langle g \rangle)$. Then we look for an element $h$ of $N$ such that $g^h = g^{-1}$. The elements $g$ and $h$ generate a dihedral group. Each conjugacy class of dihedral group can be found easily in that way. Let $S$ be a sequence containing one representative of each dihedral subgroup.

The next step is to store, for one representative $i_j$ of each conjugacy class $I_j$ of involutions of $G$ the set of involutions $C_j$ of $G$ that commute with $i$.

Then for each dihedral subgroup $D\in S$, and for each non-conjugate pair of generators $(\rho_0,\rho_1)$ of $D$, we look for involutions $\rho_2$ such that $\rho_2$ commutes with $\rho_0$ (those are in the set $C_j$ for the $j$ such that the involution $i_j$ is conjugate with $\rho_0$ in $G$) and such that $\langle \rho_0,\rho_1,\rho_2\rangle = G$. If $G$ is simple (or even if $G$ simply does not contain a nontrivial cyclic normal subgroup), the intersection property is satisfied and we do not need to test it (Proposition 4.1. in \cite{intersection}). Otherwise we do test it. We then do the same interverting the roles of $\rho_1$ and $\rho_0$.

\subsection{Computational times}

Table~\ref{tab:timings} gives, for a given group $G$, the Time\footnote{The timings presented in this section were obtained using {\sc Magma}\cite{magma} running on a computer with 32 cores running at 2.3Ghz and 384Gb of RAM. Note that {\sc Magma} does not do parallel computing, it is using one core at a time. } it takes (in seconds) to compute all string C-group representations with the new algorithm, the Time it takes with the old algorithm (when this time is small enough and the old algorithm was capable of getting the whole result), the number of string C-group representations of rank greater than 3 and the number of string C-group representations of rank three.
The time in the Time column is given as a sum of two numbers. The first number is the time it takes to compute string C-group representations of rank at least four. The second number is the time it takes to compute the rank three ones. So for instance, for the group $J_1$, it takes 1.4 seconds to get the two string C-group representations of rank four and show there are no string C-group representation of rank at least five, and it takes 2 seconds to get the 148 ones of rank three. 

\begin{table}
    \centering
\begin{tabular}{|c|c|c|c|c|}
\hline
$G$&Time&Time old algo&\# pol. of rank > 3&\# pol. of rank 3\\
\hline
\hline
$M_{11}$&0.01s + 0.02 = 0.03s&0.76&0&0\\
$M_{12}$&4.5s + 0.4 = 5s&191&14&23\\
$J_{1}$&1.4s + 2 = 3.4s&357&2&148\\
$M_{22}$&0.3s + 0.06 = 0.36s&414&0&0\\
\hline
$J_{2}$&14s + 3.2=17.2s&&17&137\\
$M_{23}$&0.45s + 0.07 = 0.52s&&0&0\\
$HS$&109 + 12 = 2m&&59&252\\
$J_{3}$&65 + 198 = 4.4m&&2 &303\\
$M_{24}$&1951 + 16 = 32.8m&&157&490\\
$McL$&3 + 1 = 4s&&0&0\\
$He$&25854 + 1166 = 7.5h&&76&1188\\
$Ru$&43516 + 114680 = 44h &&227&21594\\
$Suz$&5957 + 16014 = 6.1h&&270&7119\\
$ONan$&573962 + ???&&16&???\\
$Co_3$&28627 + 5522 = 9.5h &&895&10586\\
\hline

\end{tabular}
    \caption{Computing times for sporadic groups}
    \label{tab:timings}
\end{table}

\section{New results}~\label{results}

Two sporadic groups smaller than the $Co_3$ group had apparently never been investigated. The two main reasons were that these groups have a higher permutation representation degree than $Co_3$ and that they have larger Sylow 2-subgroups (of respective sizes $2^{13}$ and $2^{14}$ while the Sylow 2-subgroups of $Co_3$ have order $2^{10}$). We analysed them with our new set of programs and we summarize our findings here.

\subsection{The Rudvalis group}
The Rudvalis group was discovered by Arunas Rudvalis in 1973~\cite{rudvalis}. It has order $145,926,144,000 = 2^{14}\cdot 3^3\cdot 5^3\cdot 7\cdot 13\cdot 29$ and smallest permutation representation degree 4060. It has two conjugacy classes of involutions.
It has 21594 string C-group representations of rank three, 227 of rank four and none of higher rank.

\subsection{The Suzuki sporadic group}
The Suzuki sporadic group was discovered by Michio Suzuki in 1968~\cite{suzuki}. It has order $448,345,497,600 = 2^{13} \cdot 3^7 \cdot 5^2 \cdot 7 \cdot 11 \cdot 13$ and smallest permutation representation degree 1782. It has two conjugacy classes of involutions. It has 7119 string C-group representations of rank three, 257 of rank four, 13 of rank five and none of higher rank.

\end{document}